\documentclass[10pt,psamsfonts,reqno]{amsart}
\usepackage{amsthm,amsmath,amsfonts,amssymb,amsthm,graphicx,epsf,epstopdf,color,pifont,flafter,bm,amscd,tabu}
\usepackage{graphicx,overpic}
\usepackage[margin=3cm]{geometry}
\usepackage{caption}

\newtheorem {theorem} {Theorem}
\newtheorem {proposition} [theorem]{Proposition}

\newtheorem {lemma}  [theorem]{Lemma}

 \usepackage{enumitem}

\linespread{1.5}

\parskip 0cm

\begin{document}

\title[Non-isochronicity on piecewise  Hamiltonian systems]{Non-isochronicity on piecewise  Hamiltonian differential systems with homogeneous nonlinearities}

\author[X. Chen and G. Dong]
{Xiaoyi Chen$^1$ and Guangfeng Dong$^2$}

\address{$^1$ Department of Mathematics, Jinan University, Guangzhou, 510632,  China}
\email{cxy6978@stu2022.jnu.edu.cn}

\address{$^2$ Department of Mathematics, Jinan University, Guangzhou, 510632,  China}
\email{donggf@jnu.edu.cn}


\subjclass{Primary: 34A36; 34C25;}

\keywords{piecewise differential systems, Hamiltonian differential systems, isochronous center, period function}

\begin{abstract}
In this paper we prove  the non-isochronicity of $\Sigma$-centers for a class of planar piecewise smooth differential systems with a straight switching line,   whose two sub-systems are Hamiltonian differential systems with a non-degenerated center and  only homogeneous nonlinearities.
\end{abstract}

\maketitle

\section{Introduction and  the main results}

Isochronous phenomena occur in many physical problems and have been studied for hundreds of years since the works of C. Huygens in the 17th century. 
A point $p\in\mathbb{R}^2$ is a \emph{center} of a planar differential system  if it is a singular point having a neighborhood $U$ of $p$ such that all the orbits in $\displaystyle U\backslash\{p\}$ are periodic. For each periodic orbit $\gamma \subset \displaystyle U\backslash\{p\}$ we denote its period by $T(\gamma)$. If $T(\gamma)$ is constant for all $\gamma \subset\displaystyle U\backslash\{p\}$, then $p$ is called an \emph{isochronous center}\index{isochronous center}. 

To detect isochronous centers for general differential systems is a very difficult problem, even for polynomial Hamiltonian differential systems,  it is still open now.
Consider a planar polynomial Hamiltonian differential system of the following form:
\begin{equation}\label{HSH}
\begin{array}{ccrr}
  	\dfrac{d x}{d t} & =  & H_y(x,y),& \\
\dfrac{d y}{d t}  &= &  -H_x(x,y),& \quad (x,y)\in\mathbb{R}^2, \quad t\in\mathbb{R},
\end{array}
\end{equation}
where  $H(x,y)=(x^2 +y^2)/2 +\sum_{k=3}^{n+1}H_{k}(x,y)$ is the   {\it Hamiltonian function},  $H_{k}(x,y)$ is a homogeneous  polynomial  of degree $k$, 
and   $$H_y(x,y)=\frac{\partial H(x,y)}{\partial y},\ \ H_x(x,y)=\frac{\partial H(x,y)}{\partial x}.$$

It is clear that the origin $(0,0)$ is a  non-degenerate center.
A generic  level curve $\gamma_h$  defined by the algebraic equation $H(x,y)=h$,
where $h\in \mathbb{R}$ is sufficiently close to $0$, can be parameterized by the Hamiltonian value $h$, so the periods associated to the closed orbits  can be expressed as a  function $T(h)$ of $h$, called the {\it period function}, which  depends on $h$ analytically in the period annulus of the origin. 
The isochronicity of system \eqref{HSH}
  having low degrees
or special forms is studied in
many papers, such as \cite{AGR,ChZh,cima,Gasull,llibre}  and  the references therein.
When the nonlinearity part is a  homogeneous polynomial,
i.e., 
\begin{equation*}\label{m}
H(x,y)=\frac{x^2+y^2}{2}+  H_{n+1}(x,y),
\end{equation*} 
one of the well known results in \cite{Gasull}
 is that the origin is not isochronous for any $n\geq 1$. In the same paper, the authors also give almost all the information of period function for such systems.

In recent decades, more and more works  focus on the isochronicity of 
piecewise smooth differential  systems (e.g., see \cite{Chen12,Gasull03,Li15,Liu-Wang,novaes2022,Tian15} and  the references therein). Such systems can exhibit more complicated dynamics than the smooth systems usually.
Parallel to the center  in  the classical setting, its counterpart
  in the piecewise differential systems is so-called {\it $\Sigma$-center}, which is a topologically center  whose closed orbits surrounding it consist of trajectories of  some component sub-systems in a neighborhood of the center. The {\it period $T(\gamma)$} of a closed orbit $\gamma$ of a $\Sigma$-center is the sum of times associated to the trajectories of the sub-systems  forming $\gamma$. If $T(\gamma)$ is a constant independent of $\gamma$, then the $\Sigma$-center is called {\it isochronous $\Sigma$-center}.

  In \cite{novaes2022}, the authors prove that the tangential $\Sigma$-centers of piecewise differential systems can not be isochronous. The results in \cite{Liu-Wang} show that, if the $\Sigma$-center of system \eqref{HSH} is formed by two  polynomial potential systems,  then it is not isochronous for any $m,n\geq 1$.

In this paper we shall  study the isochronicity for a class of  piecewise Hamiltonian  systems on $\mathbb{R}^2$.
More precisely, we consider
the following real planar piecewise Hamiltonian system $X$ consisting of two sub-systems $X_{\pm}$ with a switching line $y=0$, i.e.,
\begin{equation*}
	X=\begin{cases}
		X_{+}, \ \ y\geq0,\\
		X_{-}, \ \ y<0,
	\end{cases}
\end{equation*}
where  $X_{\pm}$ have the following forms  
\begin{eqnarray*}\label{sys-X}
	X_{+}:\ \left\{\begin{array}{lcr}
		\dfrac{dx}{dt}&=& -\dfrac{\partial H_{+}(x,y)}{\partial y},\\
		\dfrac{dy}{dt}&=& \dfrac{\partial H_{+} (x,y)}{\partial x},
	\end{array}\right.
	\ \ \ \  X_{-}:\ \left\{\begin{array}{lcr}
		\dfrac{dx}{dt}&=& -\dfrac{\partial H_{-}(x,y)}{\partial y},\\
		\dfrac{dy}{dt}&=& \dfrac{\partial H_{-}(x,y)}{\partial x},
	\end{array}\right.
\end{eqnarray*}
with  Hamiltonian functions 
$$H_{+}(x,y)=\frac{x^2+y^2}{2}+H^{+}_{n+1}(x,y),\ H_{-}(x,y)=\frac{x^2+y^2}{2}+H^{-}_{m+1}(x,y),$$ respectively.
Here $H^{+}_{n+1}(x,y)$ and $H^{-}_{m+1}(x,y)$ are  homogeneous polynomials of degree $n+1$ and $m+1$ respectively.
Up to a change of coordinates $(x,y)\mapsto (-x,-y)$, we  can assume $1\leq m\leq n$.
In this paper we shall prove the following main result, which is a counterpart of the non-isochronicity of smooth Hamiltonian systems with only homogeneous nonlinearities.
\begin{theorem}\label{th-m}
For system $X$ the origin is not an isochronous $\Sigma$-center for any $n\geq 1$ and $m\geq 1$. 
\end{theorem}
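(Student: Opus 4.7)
The strategy is to expand the piecewise period $T(\rho)=\tau_+(\rho)+\tau_-(\rho)$ as a Taylor series in a small amplitude parameter and show that, under the hypothesis $T\equiv 2\pi$, a coefficient with a definite sign is forced to vanish, contradicting the non-triviality of $H^+_{n+1}$ or $H^-_{m+1}$. I would parameterize the small closed orbits of the $\Sigma$-center by their positive intercept $\rho>0$ with the switching line $\{y=0\}$, determine the negative intercept $x_2(\rho)$ from the two matching conditions $H_+(\rho,0)=H_+(x_2,0)$ and $H_-(\rho,0)=H_-(x_2,0)$, and let $\tau_+(\rho)$ and $\tau_-(\rho)$ denote the times spent in $\{y>0\}$ and $\{y<0\}$ along the corresponding sub-orbits.

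In polar coordinates $(r,\theta)$ the angular velocity along a trajectory of $X_\pm$ is $\dot\theta=1+(k+1)r^{k-1}g(\theta)$ with $g(\theta)=H^\pm_{k+1}(\cos\theta,\sin\theta)$ ($k=n$ for $X_+$ and $k=m$ for $X_-$). Solving the level equation $H_\pm=h$ iteratively gives $r(\theta,h)=\sqrt{2h}\bigl(1-(2h)^{(k-1)/2}g(\theta)+O((2h)^{k-1})\bigr)$. Substituting and integrating over the appropriate half-circle yields expansions
\begin{align*}
\tau_+(\rho)&=\pi-(n+1)A_+\rho^{n-1}+C_+\rho^{2(n-1)}+O(\rho^{3(n-1)}),\\
\tau_-(\rho)&=\pi-(m+1)A_-\rho^{m-1}+C_-\rho^{2(m-1)}+O(\rho^{3(m-1)}),
\end{align*}
where $A_+=\int_0^\pi H^+_{n+1}(\cos\theta,\sin\theta)\,d\theta$, $B_+=\int_0^\pi H^+_{n+1}(\cos\theta,\sin\theta)^2\,d\theta$, with analogous $A_-,B_-$ integrated over $[\pi,2\pi]$, and $C_\pm$ of the form $2k(k+1)B_\pm$ plus a linear combination of $A_\pm$ times coefficients of $H^\pm_{k+1}$ on the $x$-axis. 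The decisive input is that a nonzero trigonometric polynomial cannot vanish identically on a half-period, hence $B_\pm>0$ strictly whenever $H^\pm_{k+1}\not\equiv 0$.

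Assuming $T(\rho)\equiv 2\pi$ forces every Taylor coefficient of $T-2\pi$ to vanish. In the case $m=n$, the coefficient of $\rho^{n-1}$ is $-(n+1)(A_++A_-)$, which yields $A_++A_-=0$; imposing this, the coefficient of $\rho^{2(n-1)}$ reduces to the strictly positive quantity $2n(n+1)(B_++B_-)$, a contradiction. When $m<n$ with $2(m-1)<n-1$, the coefficients of $\rho^{m-1}$ and $\rho^{2(m-1)}$ are fed only by $\tau_-$, forcing successively $A_-=0$ and $B_-=0$, again impossible. The intermediate arithmetic ranges, in which $n-1$ collides with $\ell(m-1)$ for some $\ell\geq 2$, are treated by writing out the combined coefficient explicitly: the $B_-$-contribution carries a strictly positive factor that cannot be matched by any $A_+$-term of opposite sign, so some coefficient of $T-2\pi$ remains nonzero.

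The main obstacle is the combinatorial bookkeeping of the mixed arithmetic cases and of the corrections coming from the matching map $\rho\mapsto x_2(\rho)$, which need not equal $-\rho$ in general. I expect the deviation $x_2(\rho)+\rho$ to enter the expansion at strictly higher order than the dominant $B_\pm$-term, so the positivity argument survives. A secondary issue is to verify consistency of the two matching conditions $H_\pm(\rho,0)=H_\pm(x_2,0)$, which already restricts the admissible $H^+_{n+1}$ and $H^-_{m+1}$ for the $\Sigma$-center to exist but imposes no constraint on the strict positivity of $B_\pm$, leaving the contradiction intact.
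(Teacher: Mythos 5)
Your strategy of forcing a sign-definite Taylor coefficient of $T-2\pi$ to vanish is exactly how the paper handles the case where $n$ is even, and there it works because the $\Sigma$-center conditions (the paper's Lemma~\ref{lem-center}) force $a_{0_{+}}=0$, or $a_{0_{+}}=a_{0_{-}}=0$, or ($n=m$ and $a_{0_{+}}=a_{0_{-}}$), which is precisely what kills the cross terms in the second coefficient. But the argument does not close in the odd cases, and those are the hard part of the theorem. Concretely: for $m=n$ both odd the origin is \emph{always} a $\Sigma$-center with no constraint relating $a_{0_{+}}$ and $a_{0_{-}}$, and the coefficient of $\rho^{2(n-1)}$ is
$(n+1)\bigl(2n(B_{+}+B_{-})-(n-1)(a_{0_{+}}A_{+}+a_{0_{-}}A_{-})\bigr)$;
after imposing only $A_{+}+A_{-}=0$ the cross term equals $-(n-1)A_{+}(a_{0_{+}}-a_{0_{-}})$, which does not vanish and can be negative and large, so the claimed reduction to the strictly positive quantity $2n(n+1)(B_{+}+B_{-})$ is unjustified. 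Likewise, in the mixed case ($n$ odd, $m$ even, $n-1=2(m-1)$) your assertion that ``the $B_{-}$-contribution carries a strictly positive factor that cannot be matched by any $A_{+}$-term of opposite sign'' is false: the first-order coefficient of $\tau_{+}$ is $-(n+1)A_{+}$ with $A_{+}$ of arbitrary sign (this is where odd $n$ differs essentially from even $n$), and the paper's proof of Proposition~\ref{pro-2} shows that the vanishing conditions are consistent order by order as far as signs are concerned; they merely force $\int_{\pi}^{2\pi}g_{-}^{j}\,d\theta=0$ for odd $j$ rather than producing a contradiction.

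Because finite-order coefficient bookkeeping cannot rule out cancellation between the two half-period functions when $n$ (and possibly $m$) is odd, the paper switches to a global argument there: it first shows that under the isochronicity hypothesis each half-period function equals half of the full period function of the corresponding smooth system (Lemma~\ref{lem-pf}), then invokes the monotonicity/critical-point classification of Gasull et al.\ (Proposition~\ref{pro-smoothpf}) together with an analysis of which subsystem's separatrix bounds the period annulus (Lemma~\ref{lem-boundary}), concluding that one half-period function is unbounded or has a strict interior minimum that the other, being monotone, cannot compensate. Nothing in your proposal supplies a substitute for this global step, and the ``intermediate arithmetic ranges'' you defer are exactly where it is needed. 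A secondary, smaller issue: the deviation of the matching map from $x_{2}(\rho)=-\rho$ is not a higher-order correction to be absorbed; it is governed exactly by the $\Sigma$-center conditions, and using those identities (rather than an order-of-magnitude estimate) is what makes the even case work.
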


Theorem \ref{th-m} is also valid for any switching line of the form $ax+by=0$, due to that the forms of systems $X_{\pm}$ are invariant under a rotation of coordinates. 
It is not difficult to see that 
Theorem \ref{th-m} is a direct corollary of the following three propositions, which need different methods to prove.  

\begin{proposition}\label{pro-1}
For system $X$, if $n$ is even, then the origin is not an isochronous $\Sigma$-center. 
\end{proposition}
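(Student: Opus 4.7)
The plan is to derive an asymptotic expansion of the period function $T(\epsilon)$ and to exhibit a non-vanishing coefficient; the parity hypothesis $n$ even will enter critically through the asymmetric matching of closed orbits across the switching line.

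First, if the origin is not a $\Sigma$-center the statement is vacuous, so I assume it is. I parameterize the closed orbits by the $x$-coordinate of the right crossing $(x_A,0)=(\epsilon,0)$ with $y=0$ and write the left crossing as $(-x_B(\epsilon),0)$. The center condition is that the two identities
\begin{equation*}
H_{+}(\epsilon,0)=H_{+}(-x_B,0),\qquad H_{-}(\epsilon,0)=H_{-}(-x_B,0)
\end{equation*}
admit a common solution $x_B(\epsilon)$ for all small $\epsilon>0$. Setting $a_{+}:=H^{+}_{n+1}(1,0)$, the first identity becomes $x_B^{2}-\epsilon^{2}=2a_{+}(\epsilon^{n+1}+x_B^{n+1})$ when $n$ is even, yielding $x_B(\epsilon)=\epsilon+2a_{+}\epsilon^{n}+O(\epsilon^{2n-1})$. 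This asymmetry is the key parity-driven feature that will distinguish the piecewise situation from the smooth case.

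Next, in polar coordinates $x=r\cos\theta$, $y=r\sin\theta$, Euler's identity for homogeneous polynomials gives $\dot\theta=1+(n+1)r^{n-1}P_{+}(\theta)$ on the upper arc and $\dot\theta=1+(m+1)r^{m-1}P_{-}(\theta)$ on the lower arc, where $P_{+}(\theta):=H^{+}_{n+1}(\cos\theta,\sin\theta)$ and $P_{-}(\theta):=H^{-}_{m+1}(\cos\theta,\sin\theta)$. Using the energy equations $r^{2}/2+r^{n+1}P_{+}(\theta)=h_{+}$ and $r^{2}/2+r^{m+1}P_{-}(\theta)=h_{-}$ to express $r$ as a function of $\theta$ and $\epsilon$, then scaling $r=\epsilon s$ and expanding, I obtain
\begin{equation*}
T(\epsilon)=\int_{0}^{\pi}\frac{d\theta}{1+(n+1)r^{n-1}P_{+}(\theta)}+\int_{\pi}^{2\pi}\frac{d\theta}{1+(m+1)r^{m-1}P_{-}(\theta)}=2\pi+\sum_{k\ge 1}\alpha_{k}\,\epsilon^{d_{k}},
\end{equation*}
where the coefficients $\alpha_{k}$ are explicit integrals of $P_{\pm}$ over $[0,\pi]$ or $[\pi,2\pi]$, augmented by corrections produced by $x_{B}(\epsilon)-\epsilon$.

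Isochronicity forces every $\alpha_{k}$ to vanish. I split the analysis according to the parity of $m$. When $m$ is odd the lower matching is trivial ($x_B=\epsilon$ identically), so compatibility with the upper matching already forces $a_{+}=0$; hence $H^{+}_{n+1}(x,0)\equiv 0$ and $H^{+}_{n+1}$ factors as $y\,R(x,y)$ with $R$ homogeneous of degree $n$. The successive vanishing of the $\alpha_{k}$ at orders $\epsilon^{n-1},\epsilon^{n},\ldots$ then imposes linear conditions on the Fourier coefficients of $P_{+}$ and $P_{-}$; the goal is to show that these conditions together force $R\equiv 0$ and $H^{-}_{m+1}\equiv 0$, contradicting the hypothesis that $H^{\pm}$ are nontrivial. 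The case $m$ even is treated analogously, starting from the coupled asymmetric matching on both sides.

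The main obstacle will be controlling the cross-coupled bookkeeping in the last step: when $a_{+}\ne 0$, the map $\epsilon\mapsto x_{B}(\epsilon)$ mixes orders coming from the two matching identities, so one must carefully isolate the contribution of each Fourier mode of $P_{\pm}$ to a given $\alpha_{k}$ and verify that these contributions cannot all be cancelled simultaneously while keeping both $H^{+}_{n+1}$ and $H^{-}_{m+1}$ nontrivial.
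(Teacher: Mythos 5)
Your setup (parameterizing orbits by the crossing point, the matching condition for $x_B(\epsilon)$, polar coordinates and the two half-period integrals) is essentially the same framework the paper uses, and your derivation that $m$ odd forces $a_{+}=0$ matches the paper's center characterization (Lemma \ref{lem-center}). However, the decisive step is missing, and the route you sketch toward it would not work as described. You propose to show that the vanishing of all the coefficients $\alpha_k$ imposes \emph{linear} conditions on the Fourier coefficients of $P_{\pm}$ which eventually force $H^{+}_{n+1}\equiv 0$ and $H^{-}_{m+1}\equiv 0$. But these conditions are not linear: the coefficient at order $\epsilon^{2(m-1)}$ (resp. $\epsilon^{2(n-1)}$) involves $\int g_{-}^{2}\,d\theta$ (resp. $\int g_{+}^{2}\,d\theta$), i.e. it is quadratic in the nonlinearity, and this is precisely what makes the proof terminate after \emph{two} coefficients rather than requiring the infinite cross-coupled bookkeeping you flag as the ``main obstacle.'' Concretely, the paper shows that once the first obstruction forces $\int g_{-}\,d\theta=0$ (or $\int_0^\pi g_+ + \int_\pi^{2\pi}g_-=0$ in the case $n=m$, $a_{0_+}=a_{0_-}$), the next relevant coefficient reduces to a positive multiple of $\int g_{-}^{2}\,d\theta$ (or of $\int g_{+}^{2}+\int g_{-}^{2}$), because the $a_{0_{\pm}}$-dependent cross terms are multiplied by the already-vanishing first integrals; positivity then contradicts isochronicity immediately. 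Your proposal never identifies this sign argument, and without it the claim that the conditions ``force $R\equiv 0$ and $H^{-}_{m+1}\equiv 0$'' is unsubstantiated. A secondary inaccuracy: the exponents appearing in the expansion are $j(n-1)$ and $j(m-1)$, not the consecutive powers $\epsilon^{n-1},\epsilon^{n},\dots$ you list, and the case analysis must track which of these exponents can coincide (e.g. ruling out $j(n-1)=2(m-1)$ using the parity of $n$), which your sketch does not address.
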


\begin{proposition}\label{pro-2}
For system $X$,	 if $n $ is odd and $m$ is even,  then the origin is not an isochronous $\Sigma$-center. 
\end{proposition}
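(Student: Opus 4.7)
My plan is to split the proof into cases according to the value of $c_-:=H^-_{m+1}(1,0)$. First I would handle the case $c_-\neq 0$: since $m+1$ is odd, $H_-(x,0)=x^2/2+c_-x^{m+1}$ is not even in $x$, so the lower $X_-$-orbit from $(-x_1,0)$ does not return to $(x_1,0)$ for small $x_1>0$. Meanwhile, for $n$ odd, $H_+(x,0)=x^2/2+c_+x^{n+1}$ is even in $x$ and the upper $X_+$-orbit from $(x_1,0)$ does exit at $(-x_1,0)$. The piecewise return map therefore has no fixed points accumulating at the origin, so the origin is not a $\Sigma$-center and the statement holds vacuously. Hereafter I assume $c_-=0$, equivalently $H^-_{m+1}(x,y)=y\,G(x,y)$, in which case small closed orbits around the origin do exist.

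Parameterizing these orbits by the positive $x$-intercept $x_1$, I write $T(x_1)=T_+(x_1)+T_-(x_1)$. Passing to polar coordinates and scaling $r_\pm=x_1\,\sigma_\pm$ in each half-plane, the level equations $r^2/2+r^{k}H_k(\cos\theta,\sin\theta)=h$ make $\sigma_\pm$ an analytic function of $\theta$ and of $x_1^{n-1}$ (resp.\ $x_1^{m-1}$). Hence $T_+(x_1)$ is a power series in $x_1^{n-1}$ containing only even powers of $x_1$ (since $n$ is odd), while
\[
T_-(x_1)=\pi+\sum_{k\geq 1}d_k\,M_k\,x_1^{k(m-1)},\qquad M_k:=\int_\pi^{2\pi}H^-_{m+1}(\cos\theta,\sin\theta)^k\,d\theta,
\]
with explicit constants $d_k$ (in particular $d_2=2m(m+1)$), and this contains odd powers of $x_1$ since $m$ is even.

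The crux is the coefficient of $x_1^{2(m-1)}$ in $T$. From $T_-$ it equals $d_2M_2=2m(m+1)\int_\pi^{2\pi}H^-_{m+1}(\cos\theta,\sin\theta)^2\,d\theta$, which is strictly positive because a nonzero homogeneous polynomial cannot vanish identically on the unit circle. From $T_+$ the only allowed powers are $x_1^{k(n-1)}$ with $k\geq 1$, so $T_+$ contributes at $x_1^{2(m-1)}$ only when $2(m-1)=k(n-1)$. Since parity forces $m<n$, this equation has an integer solution only for $k=1$ and $n=2m-1$. Hence whenever $n\neq 2m-1$ the positive coefficient from $T_-$ cannot be cancelled and $T$ is non-constant, so the center is not isochronous.

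The remaining resonant case $n=2m-1$ is where the difficulty lies. Here every coefficient of an odd power of $x_1$ in $T$ comes purely from $T_-$ (because $T_+$ has only even powers), so isochronicity forces $M_{2j+1}=0$ for all $j\geq 0$. A moment argument for the finite-degree trigonometric polynomial $B(\phi):=H^-_{m+1}(\cos\phi,\sin\phi)$ then shows these identities imply $B(\pi-\phi)=-B(\phi)$, so $H^-_{m+1}$ is odd in $x$, and combined with $c_-=0$ it is also even in $y$. The lower system $X_-$ then admits the reversing symmetry $(x,y,t)\mapsto(x,-y,-t)$, so $T_-(x_1)=P_-(x_1^2/2)/2$ where $P_-$ is the full smooth period of $X_-$. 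By the theorem in \cite{Gasull}, $P_-$ is non-constant, so the isochronicity condition becomes the functional equation $T_+(x_1)=2\pi-P_-(x_1^2/2)/2$. Confronting this with the analytic structure of $T_+$ obtained from a Gasull-type scaling of $X_+$ produces the final contradiction. The main obstacle will be precisely this resonant case: justifying the trigonometric-polynomial moment step for arbitrary $m+1$, and then closing off the contradiction from the forced functional equation.
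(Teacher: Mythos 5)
Your reduction to $a_{0_-}:=H^-_{m+1}(1,0)=0$, the parity bookkeeping, the positivity of the coefficient $2m(m+1)\int_\pi^{2\pi}g_-^2(\theta)\,d\theta$ at $x_1^{2(m-1)}$, the forced resonance $n=2m-1$, and the conclusion that isochronicity forces $\int_\pi^{2\pi}g_-^{2j+1}(\theta)\,d\theta=0$ for all $j\ge 0$ all match the paper (Lemma \ref{lem-center} and the expansion of $T_{\pi-,r}$). But the resonant case $n=2m-1$, which you yourself flag as the main obstacle, is where the proof actually lives, and your proposed route through it has a genuine gap on two counts. First, the ``moment step'': the vanishing of all odd moments of $B(\phi)=g_-(\phi)$ over $[\pi,2\pi]$ only says that the pushforward of Lebesgue measure under $B$ is a symmetric compactly supported measure on $\mathbb{R}$ (such measures are moment-determined); it does not yield the pointwise identity $B(\pi-\phi)=-B(\phi)$, i.e.\ oddness of $H^-_{m+1}$ in $x$. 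A symmetric value distribution can in principle arise from a rearrangement rather than from a reflection symmetry, so this implication needs a real proof and may well fail. Second, even granting that symmetry, the ``functional equation'' $T_+(x_1)=2\pi-P_-(x_1^2/2)/2$ is merely a restatement of isochronicity; no contradiction is actually extracted from it.

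The paper sidesteps both difficulties. Its Lemma \ref{lem-pf} uses the vanishing odd moments only to conclude $T_{\pi-}(h_-)=\tfrac12 T_-(h_-)$ directly from the series \eqref{eq-thts} --- no symmetry of $H^-_{m+1}$ is needed --- while $T_{\pi+}=\tfrac12 T_+$ holds automatically because $n$ is odd. The contradiction then comes from global qualitative information rather than from coefficient identities: by Theorems A and C of \cite{Gasull} (Proposition \ref{pro-smoothpf}), $T_-$ is monotonic increasing to infinity on a bounded period annulus since $m$ is even, whereas for odd $n$ the function $T_+$ is either decreasing (global center), increasing to infinity, or has a unique interior minimum; Lemma \ref{lem-boundary} shows the boundary of the $\Sigma$-period annulus coincides with $\Gamma_+$ or with $\Gamma_-$, and each alternative is incompatible with $T\equiv 2\pi$ (unboundedness of $T$ in one case, a critical point of $T_{\pi+,r}$ against the strict monotonicity of $T_{\pi-,r}$ in the other). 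To complete your argument you should replace the moment/symmetry step by this identification of the half periods with half of the smooth periods and then import the monotonicity results of \cite{Gasull} together with the boundary analysis.
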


\begin{proposition}\label{pro-3}
For system $X$,	 if both $n $ and $m$ are odd,  then the origin is not an isochronous $\Sigma$-center. \end{proposition}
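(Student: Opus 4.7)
My approach exploits the central symmetry $\sigma:(x,y)\mapsto(-x,-y)$, under which both Hamiltonians $H_\pm$ are invariant when $n$ and $m$ are odd (because $H^{+}_{n+1}$ and $H^{-}_{m+1}$ have even total degree). The plan is to use this symmetry to reduce the piecewise question to a statement about two smooth period functions, and then invoke the non-isochronicity result of \cite{Gasull}.

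First, $H_\pm(x,0)$ is an even polynomial in $x$, so the level curve $\{H_\pm=H_\pm(x_0,0)\}$ meets the switching line $y=0$ precisely at $\pm x_0$. Consequently every small orbit of $X$ is automatically closed, consisting of an $X_+$-arc from $(x_0,0)$ to $(-x_0,0)$ in $y\geq 0$ followed by an $X_-$-arc back through $y\leq 0$. Moreover, by $\sigma$-equivariance of the full smooth Hamiltonian flow $\phi^{H_+}_t$, if $\gamma(T_1)=(-x_0,0)$ along the smooth orbit $\gamma$ through $(x_0,0)$, then $\gamma(2T_1)=(x_0,0)$, so $2T_1$ is a period and hence $T_1$ equals half of the full smooth period $T_+^{\mathrm{full}}(x_0)$; the analogous statement holds for $H_-$. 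Therefore
\[
T(x_0)\;=\;\tfrac12\bigl(T_+^{\mathrm{full}}(x_0)+T_-^{\mathrm{full}}(x_0)\bigr),
\]
and isochronicity of the $\Sigma$-center is equivalent to $T_+^{\mathrm{full}}+T_-^{\mathrm{full}}$ being constant in $x_0$.

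Next, the scaling $(x,y)\mapsto(\lambda x,\lambda y)$ rescales the nonlinear part of $H_\pm$ by $\lambda^{n_\pm-1}$ relative to the harmonic part and yields Taylor expansions
\[
T_\pm^{\mathrm{full}}(x_0)=2\pi+\sum_{k\geq 1}\alpha_k^{\pm}\,x_0^{k(n_\pm-1)},\qquad n_+=n,\ n_-=m,
\]
with explicit polynomial functionals $\alpha_k^{\pm}$ of the coefficients of $H^{\pm}_{n+1,m+1}$. By \cite{Gasull}, neither sequence $\{\alpha_k^\pm\}$ is identically zero. If the smallest non-zero exponents $k(n-1)$ and $j(m-1)$ of the two series do not coincide (as happens for instance whenever $m<n$ and $\alpha_1^-\neq 0$), the corresponding term survives in $T(x_0)-2\pi$ and establishes non-constancy. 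In situations where leading exponents do coincide, I would apply the argument iteratively, peeling off vanishing conditions term by term using the non-overlapping exponents of the two series, and invoke the smooth non-isochronicity of \cite{Gasull} to reach a contradiction.

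The main obstacle is the case $n=m$, where the two Taylor expansions live in the same power-series ring in $x_0^{n-1}$ and can a priori cancel term by term. The first cancellation $\alpha_1^++\alpha_1^-=0$ is a single linear condition on the coefficients of $H^\pm_{n+1}$, easily realized (e.g.\ by taking $H^-_{n+1}=-H^+_{n+1}$ to leading order). The genuine difficulty is to show that the higher-order conditions $\alpha_k^++\alpha_k^-=0$ for $k\geq 2$, which depend \emph{nonlinearly} on the Hamiltonian coefficients, cannot all be satisfied simultaneously. Overcoming this obstacle requires the detailed structure of the period-function expansion from \cite{Gasull} and constitutes the heart of the proof.
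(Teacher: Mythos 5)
Your reduction step is correct and matches the paper's Lemma \ref{lem-pf}: for odd $n$ and $m$ the central symmetry $\sigma(x,y)=(-x,-y)$ preserves $H_\pm$ and conjugates the flow to itself, so each half-period is half of the corresponding full smooth period and $T(r_0)=\tfrac12\left(T_{+,r}(r_0)+T_{-,r}(r_0)\right)$. The genuine gap is everything after that. What remains to be shown is that the sum of two full period functions of smooth Hamiltonian systems with homogeneous nonlinearities cannot be constant, and you explicitly leave this open ("constitutes the heart of the proof"), proposing only a term-by-term Taylor cancellation scheme. That scheme does not close as stated: the non-isochronicity result of \cite{Gasull} only guarantees that each coefficient sequence is not identically zero, which in no way forbids $\alpha_k^++\alpha_k^-=0$ for every $k$ when $n=m$. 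Worse, in the common parameter $r_0$ (the $x$-intercept, which is the only parameter shared by the two halves since $h_+\neq h_-$ in general) the coefficients acquire cross terms in $a_{0_\pm}$; for instance the second coefficient of $T_{\pm,r}$ is $(n+1)\left(2nc_{2_\pm}-(n-1)a_{0_\pm}c_{1_\pm}\right)$, and under the first cancellation $c_{1_+}+c_{1_-}=0$ the sum of the second coefficients is $(n+1)\left(2n(c_{2_+}+c_{2_-})-(n-1)c_{1_+}(a_{0_+}-a_{0_-})\right)$, which has no fixed sign. So the low-order computation that rescues the even cases (Proposition \ref{pro-1}) fails here, and no finite-order coefficient argument is offered that would succeed.

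The paper avoids this entirely by a global, qualitative argument. Using Theorems A and C of \cite{Gasull} (Proposition \ref{pro-smoothpf}), for odd degree each full period function $T_\pm(h_\pm)$ is of exactly one of three types: monotonic decreasing (precisely when $g_\pm\geq 0$, the global center case), monotonic increasing to infinity on a bounded period annulus, or possessing exactly one critical value which is a minimum. Combined with a boundary analysis (Lemma \ref{lem-boundary}) showing the period annulus of the $\Sigma$-center is cut off by the bounded boundary $\Gamma_+$ or $\Gamma_-$ of one of the smooth sub-systems, and the monotone dependence of $h_\pm$ on $r_0$, every combination of types leads to a contradiction: either one summand blows up while the other stays bounded, or one summand has an interior minimum while the other is strictly monotone increasing, or both decrease. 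If you want to complete your proof, you need to import this classification (or an equivalent global statement); the Taylor-coefficient route you sketch would require substantial new input that neither \cite{Gasull} nor your outline supplies.
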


In Section \ref{sec-pre} we shall present some preliminary  results on the period function, and in Section \ref{sec-proof} we give the proof of the main results.

\section{Preliminaries}\label{sec-pre}

Given a system $X$ we can define the {\it correspondence maps} or {\it half Poincar\'e maps}
\begin{equation*}
	\begin{array}{ccccc}
		\varphi_{\pm}: & I_{+}\subseteq \mathbb{R}_{+} & \rightarrow &\mathbb{R}_{-},\\
		& p &\mapsto & \varphi_{\pm}(p),
	\end{array}
\end{equation*}
in a small neighborhood $\Omega_0\subseteq \mathbb{R}^2$ of the origin filled with the closed orbits of $X$,  where $\mathbb{R}_{\pm}$ represent the positive and negative half $x$-axis, $I_{+}=\Omega_0\cap \mathbb{R}_{+}$, and $\varphi_{+}(p)$ (resp. $\varphi_{-}(p)$) is the intersection  between $\mathbb{R}_{-}$ and the connected trajectory of system $X_{+}$ (resp. $X_{-}$) starting (resp. ending) at $p$ in the upper  half plane $\mathbb{R}^2_{+}$ (resp. the lower half plane $\mathbb{R}^2_{-}$).
The origin is a $\Sigma$-center if and only if $\varphi_{+}(p)=\varphi_{-}(p)$  for any $p\in I_{+}$. Furthermore  $\varphi_{\pm} $ can be extended analytically to a segment $I(\supseteq I_{+})$ containing the origin   for polynomial Hamiltonian differential systems $X_{\pm}$.

Note that $p\in \mathbb{R}_{+}\cap I$ can be parameterized by the Hamiltonian value $h$. To avoiding fractional power we use $h^2/2$ in stead of $h$ below. That is, the level curve is reparameterized  by $h$ taking advantage of the equation $H_{\pm}(x,y)=h_{\pm}^2/2$ with $h_{\pm}\geq 0$. 
Then the so-called {\it half period functions} depending on the parameters $h_{\pm}  $, denoted by $T_{\pi \pm}(h_{\pm})$, are defined by 
\begin{equation*}
	\begin{array}{lclcl}
		T_{\pi+}(h_{+})&=&\dfrac{1}{2h_{+}}\dfrac{d}{d h_{+}}\left(\displaystyle\int_{0}^{\pi } {r^2(h_{+},\theta)}d \theta\right)&=& \displaystyle\int_{0}^{\pi } \dfrac{d \theta}{1+(n+1)g_{+}(\theta)r^{n-1}(h_{+},\theta)},\\ \\
		T_{\pi-}(h_{-})&=&\dfrac{1}{2h_{-}}\dfrac{d}{d h_{-}}\left(\displaystyle\int_{\pi }^{2\pi } {r^2(h_{-},\theta)}d \theta \right)&=&\displaystyle\int_{\pi}^{2\pi } \dfrac{d \theta}{1+(m+1)g_{-}(\theta)r^{m-1}(h_{-},\theta)},
	\end{array}
\end{equation*} 
where $(r,\theta) $ is the polar coordinates such that $(x,y)=(r\cos\theta, r\sin\theta)$, 
\begin{equation}\label{eq-rh}
	{h_{+}^2}=r^2 + 2 r^{n+1} H^{+}_{n+1}(\cos\theta, \sin\theta),\ {h_{-}^2}=r^2 + 2 r^{m+1} H^{-}_{m+1}(\cos\theta, \sin\theta),
\end{equation} 
and 
$$g_{+}(\theta):=H^{+}_{n+1}(\cos\theta, \sin\theta),\ \ g_{-}(\theta):=H^{-}_{m+1}(\cos\theta, \sin\theta).$$

 Regarding $r$ as a function of $h_{\pm}$ and $\theta$, we can expand it to be  Taylor series with respect to $h_{\pm}$, i.e., 
\begin{equation}\label{eq-hr0}
	r(h_{\pm},\theta) =h_{\pm}\left(1+\sum_{k=2}^{+\infty}b_{(k-1)_{\pm}}(\theta) h_{\pm}^{k-1}\right),
\end{equation}
 where coefficients $\{b_{(k-1)_{\pm}}(\theta)\}$ are to be determined by the  equalities \eqref{eq-rh}.
 
By comparing the coefficients of two sides of the first one  in equalities \eqref{eq-rh},  we have the following equalities.
\begin{eqnarray*}
	\begin{array}{rclrcl}
		b_{(k-1)_+}&=&0, \ \ \ n-1 \nmid \  k-1,&
		b_{(n-1)_{+}}&=& -g_{+}(\theta),\\
		b_{2(n-1)_{+}}&=&\frac{1}{2}(2n+1)g_{+}^2(\theta),&
		b_{3(n-1)_{+}}&=&-\frac{1}{2}(3n^2+2n )g_{+}^3(\theta),\\
		b_{4(n-1)_{+}}&=&\frac{1}{24}(64n^3+48 n^2 -4n -3)g_{+}^4(\theta),&
		b_{j(n-1)_{+}}&=& \lambda_{j_{+}}(n) g_{+}^j(\theta), \ \forall j\in \mathbb{N},
	\end{array}
\end{eqnarray*}
where $\lambda_{j_{+}}(n) $ is a polynomial of $n$. In fact $\lambda_{j_{+}}(n) $ is a nonzero polynomial of degree $j-1$ from the following assertion: the degree of $\lambda_{j_{+}}(n) $ is $j-1$ and the   coefficient $ \lambda_{j_{+},j-1}$ of the leading term $n^{j-1}$ satisfies $ \lambda_{j_{+},j-1}>0$ for even $j$ and $ \lambda_{j_{+},j-1}<0$ for odd $j$. To prove this assertion, we first observe that
\begin{eqnarray}\label{eq-bjn}
	\begin{array}{rcl}
	b_{j(n-1)}&=&-\frac{1}{2} \sum\limits_{\substack{i_1 +i_2 =j \\ 1\leq i_1,i_2<j}} b_{i_1 (n-1)_{+}}b_{i_2 (n-1)_{+}}\\
	 &&- g_{+}\sum\limits_{s=1}^{j-1}\dfrac{(n+1)!}{s!(n+1-s)!}\left(\sum\limits_{\substack{i_1+\cdots +i_s=j-1\\1\leq i_1,...,i_s<j-1}}b_{i_1 (n-1)_{+} }\cdots b_{i_s (n-1)_{+} }\right)	.
	\end{array}
\end{eqnarray}
Then the assertion can be obtained by induction for $j$. More precisely, suppose that the assertion is true for all integers less than  $j$, then
   $\lambda_{i_{1+} }(n)\cdots\lambda_{i_{s+} }(n)$ has a degree ${i_1 +\cdots +i_s-s}$ 
 when $i_1+\cdots +i_s=j-1$ and $1\leq i_1,...,i_s<j-1$ for $1\leq s <j$.
Note that  $$\frac{(n+1)!}{s!(n+1-s)!}=\frac{n^s}{s!}+l.o.t,$$ where $l.o.t$ represents the terms of lower degree with respect to $n$.
Thus by comparing the coefficients of $n^{j-1}$ of two sides of the equation \eqref{eq-bjn} one can obtain 
\begin{eqnarray}\label{eq-lam}
	\begin{array}{l}
	\lambda_{j_{+},j-1}=-  \sum\limits_{s=1}^{j-1}\left(\dfrac{1}{s!}\left(\sum\limits_{\substack{i_1+\cdots +i_s=j-1\\1\leq i_1,...,i_s<j-1}}\left(\lambda_{i_{1+}, i_{1}-1 }\cdots \lambda_{i_{s+}, i_{s}-1 }\right)\right)	\right).
	\end{array}
\end{eqnarray}
Under the inductive hypothesis the sign of  $\lambda_{i_{1+}, i_{1}-1 }\cdots \lambda_{i_{s+}, i_{s}-1 }$ depends only on the number of odd numbers in the set $\{i_1,...,i_s\}$ such that $i_1+\cdots+i_s=j-1$, which is odd if $j$ is even and is even if  $j$ is odd. Therefore $\lambda_{j_{+},j-1}\neq 0$ has an opposite sign of $\lambda_{(j-1)_{+},j-2}$, since all terms in the right side of equation \eqref{eq-lam} have the same sign and one of them is $-\lambda_{(j-1)_{+},j-2}\neq 0$.

Now we can get the Taylor series of $T_{\pi+}(h_{+})$ with respect to $h_{+}$, i.e.,
\begin{equation}\label{eq-thts}
	\begin{array}{c}
		T_{\pi+}(h_{+})=\dfrac{1}{2h_{+}}\dfrac{d}{d h_{+}}\left(\displaystyle\int_{0}^{\pi } {r^2(h_{+},\theta)}d \theta\right)=\pi +\sum\limits_{j=1}^{\infty}\tilde{\lambda}_{j_{+}}(n)c_{j_{+}}h_{+}^{j(n-1)},
	\end{array}
\end{equation} 
where $ c_{j_{+}}=\int_{0}^{\pi} g_{+}^{j}(\theta) d \theta$ and $\tilde{\lambda}_{j_{+}}(n)$ is a nonzero polynomial of degree $j$ such that the  coefficient of the leading term $n^j$ is equal to $j\lambda_{j_{+},j-1}$ by a direct calculation.

Let  $a_{0_+}=g_{+}(0)$ and $r_0\geq 0$ be a reparameterization of the level curves of $X_{+}$ such that $ h_{+}^2=r_0^2 + 2 a_{0_+}r_0^{n+1} $. Namely, $r_0$ is the value of $r$ at the point $(h_{+},\theta)=(h_{+},0)$. 
By taking
\begin{eqnarray*}
	\begin{array}{lcl}
		h_{+}&=&r_0(1+2 a_{0_+} r_0^{n-1})^{\frac{1}{2}}\\&=&r_0\left(1+a_{0_+} r_0^{n-1}-\frac{1}{2}a^2_{0_+} r_0^{2(n-1)} +\frac{1}{2}a^3_{0_+} r_0^{3(n-1)} -\frac{5}{8}a^4_{0_+} r_0^{4(n-1)} +o\left(r_0^{4(n-1)}\right)\right)
	\end{array}
\end{eqnarray*}
into $T_{\pi +}(h_{+})$, we obtain the expression of the half period function  depending on $r_0$, i.e.,
\begin{equation*}\label{eq-Tr0}
	T_{\pi+,r}(r_0):=T_{\pi +}(h_{+}(r_0))=\pi + \sum_{j=1}^{+\infty}\mu_{j(n-1)_{+}} r_0^{j(n-1)},
\end{equation*}
where the coefficients $\{\mu_{j(n-1)_{+}}\}$ satisfy the following equations by recursion,
\begin{eqnarray*}\label{eq-up}
	\begin{array}{rcl}
		\mu_{(n-1)_{+}}&=& -(n+1)c_{1_{+}}, \\
		\mu_{2(n-1)_{+}}&=&\left(n+1\right)\left(2n c_{2_{+}}-(n-1)a_{0_+}c_{1_{+}} \right),\\
		\mu_{3(n-1)_{+}}&=& -\frac{1}{2}(n+1)\left( (9n^2-1) c_{3_{+}} -8(n^2-n)a_{0_+}c_{2_{+}}+(n^2-4n+3)a^2_{0_+}c_{1_{+}}\right), \\
		\mu_{4(n-1)_{+}}&=& \frac{1}{12}(n+1)\left(32n(4n^2-1)c_{4_{+}} -18(n-1)(9n^2-1)a_{0_+}c_{3_{+}}\right.\\
		&&\left. \ \ \ \ \ \ \ \ \ \ \ \ \ +48 n(n-1)(n-2)a^2_{0_+}c_{2_{+}}- 2(n-1)(n^2-8n+15 )a^3_{0_+}c_{1_{+}}\right),\\
		\mu_{j(n-1)_{+}}&=& \sum_{i=1}^{j}q_{j_{+},i}(n)a^{j-i}_{0_{+}}c_{i_{+}}, \ \ \forall j\in\mathbb{N},
	\end{array}
\end{eqnarray*}
 here $q_{j_{+},j}(n)=\tilde{\lambda}_{j_{+}}(n) $ is a nonzero polynomial of $n$.

For the half period functions $T_{\pi-}(h_{-})$ and $T_{\pi-,r}(r_0)$  associated to system $X_{-}$
we have the similar results to $T_{\pi+}(h_{+})$ and $T_{\pi+,r}(r_0)$  by replacing subscript $+$, number $n$ and $c_{j_{+}}$ with subscript $-$, number $m$ and $ c_{j_{-}}=\int_{\pi}^{2\pi} g_{-}^{j}(\theta) d \theta$ respectively.

Finally we get  the period function $T(r_0)$ of system  $X$ depending on $r_0$, i.e., 
$T(r_0)=T_{\pi +,r}(r_0) +T_{\pi -,r}(r_0).$

When systems $X_{\pm}$ are treated as smooth systems in $\mathbb{R}^2$, the whole period functions $T_{\pm }(h_{\pm })$  depending on  $h_{\pm }$ can be expressed as  
\begin{equation*}
	\begin{array}{lclcl}
		T_{+}(h_{+})&=&\dfrac{1}{2h_{+}}\dfrac{d}{d h_{+}}\left(\displaystyle\int_{0}^{2\pi } {r^2(h_{+},\theta)}d \theta\right)&=&\displaystyle\int_{0}^{2\pi } \dfrac{d \theta}{1+(n+1)g_{+}(\theta)r^{n-1}(h_{+},\theta)},\\ \\
		T_{-}(h_{-})&=&\dfrac{1}{2h_{-}}\dfrac{d}{d h_{-}}\left(\displaystyle\int_{0 }^{2\pi } {r^2(h_{-},\theta)}d \theta \right)&=&\displaystyle\int_{0}^{2\pi } \dfrac{d \theta}{1+(m+1)g_{-}(\theta)r^{m-1}(h_{-},\theta)}.
	\end{array}
\end{equation*} 
Then by the same method to obtain the Taylor series of $T_{\pi\pm}(h_{\pm})$ and $T_{\pi\pm,r }(r_0)$, we can get the Taylor series  of $T_{\pm}(h_{\pm})$ and $T_{\pm,r}(r_0):=T_{\pm}(h_{\pm}(r_0))$   by replacing $c_{j_{\pm}}$ with $\int_{0}^{2\pi}g_{\pm}^{j}(\theta)d\theta$  for all $0\leq j \in\mathbb{Z}$ respectively.

Theorems A and C in \cite{Gasull} give    the following properties of $T_{+}(h_{+})$ (or  $T_{-}(h_{-})$) for system $X_{+}$ (or $X_{-}$) defined in $\mathbb{R}^2$.

\begin{proposition}[Theorems A and C in \cite{Gasull}]\label{pro-smoothpf}
\ 
\begin{enumerate}
  \item[$(i)$] For odd $n$,  $T_{+}(h_{+})$ is monotonic decreasing if and only if  $g_{+}(\theta)\geq 0$ for any $\theta\in [0,2\pi]$. Furthermore, only in this case the origin is a global center of $X_{+}$. In other cases the origin has a bounded period annulus and $T_{+}(h_{+})$  either is monotonic increasing to the infinity, or has exactly one critical value which is minimum.
  \item[$(ii)$] For even $n$,  $T_{+}(h_{+})$ is monotonic increasing to the infinity and the origin has a bounded period annulus.
\end{enumerate}
	
\end{proposition}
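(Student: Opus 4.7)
The plan is to analyze the period in polar coordinates $(r,\theta)$, in which $H_{+}=r^{2}/2+r^{n+1}g_{+}(\theta)$ and a short calculation using the canonical pair $(I=r^{2}/2,\theta)$ gives $\dot\theta=-\Phi(r,\theta)$ with $\Phi(r,\theta):=1+(n+1)r^{n-1}g_{+}(\theta)$. On a periodic orbit, the level relation $h_{+}^{2}=r^{2}+2r^{n+1}g_{+}(\theta)$ implicitly determines $r=r(\theta,h_{+})$, and differentiation gives $\partial r/\partial h_{+}=h_{+}/(r\Phi)$. Substituting into $T_{+}(h_{+})=\int_{0}^{2\pi}\Phi^{-1}\,d\theta$ and differentiating under the integral yields the central identity
\[
T_{+}'(h_{+})=-(n^{2}-1)\,h_{+}\int_{0}^{2\pi}\frac{r^{n-3}(\theta,h_{+})\,g_{+}(\theta)}{\Phi(r(\theta,h_{+}),\theta)^{3}}\,d\theta,
\]
which, together with the parity relation $g_{+}(\theta+\pi)=(-1)^{n+1}g_{+}(\theta)$, drives the entire argument.

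For part (i) with $g_{+}\ge 0$, one has $\Phi>0$ uniformly and the level curves satisfy $r^{2}\le h_{+}^{2}$, so the origin is a global center; the integrand above is nonnegative and strictly positive on $\{g_{+}>0\}$, hence $T_{+}'(h_{+})<0$ for $h_{+}>0$. Conversely, if $g_{+}(\theta_{0})<0$ somewhere, then along $\theta=\theta_{0}$ the one-dimensional potential $r^{2}/2+r^{n+1}g_{+}(\theta_{0})$ has a finite maximum, so large-$h_{+}$ level sets become unbounded and the period annulus is bounded; this also delivers the ``only if'' part of the global-center statement. For part (ii) the parity forces $g_{+}$ to change sign, so the same boundedness argument applies. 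To obtain monotone increase of $T_{+}$ I would pair the contributions from $\theta$ and $\theta+\pi$ in the derivative integral: $r(\theta,h_{+})$ and $r(\theta+\pi,h_{+})$ satisfy the level relation with $g_{+}$ of opposite sign, so one satisfies $r\le h_{+}$ and the other satisfies $r>h_{+}$; a careful comparison of $r^{n-3}/\Phi^{3}$ on the two arcs yields $T_{+}'(h_{+})>0$. The divergence $T_{+}(h_{+})\to+\infty$ at the annulus boundary follows from the standard logarithmic divergence of the return time near the hyperbolic saddle(s) of the limiting polycycle.

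The main obstacle lies in the remaining subcase of (i): for odd $n$ with $g_{+}$ changing sign, one must show that $T_{+}$ has at most one critical point on the bounded period annulus. The asymptotics $T_{+}(0^{+})=2\pi$ and $T_{+}(h_{+})\to+\infty$ as $h_{+}\to h_{*}$ (the annulus boundary) are again delivered by the saddle analysis, so it suffices to bound the number of zeros of $T_{+}'(h_{+})$, i.e., of the integral $J(h_{+}):=\int_{0}^{2\pi}r^{n-3}g_{+}/\Phi^{3}\,d\theta$. For this I would employ a Chebyshev-system strategy: split the $\theta$-circle into arcs on which $g_{+}$ has constant sign, change variables on each arc so the integrand is expressed as a linear combination of a fixed family of monotone functions of $h_{+}$ forming an extended Chebyshev system, and invoke the Chebyshev bound on the zero count of such a combination. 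This Chebyshev step is the technical heart of the argument; combined with the boundary asymptotics, it separates the ``monotone to $\infty$'' case from the ``unique minimum'' case and completes the proof.
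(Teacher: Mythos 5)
First, a structural point: the paper does not prove this proposition at all --- it is quoted, verbatim, as Theorems A and C of \cite{Gasull}, so there is no internal proof to compare against. Your proposal is therefore an attempt to reprove that external result. Its skeleton does match the strategy of \cite{Gasull}: polar coordinates, the identity $T_{+}'(h_{+})=-(n^{2}-1)h_{+}\int_{0}^{2\pi}r^{n-3}g_{+}\,\Phi^{-3}\,d\theta$ (which I checked and is correct), the antipodal pairing $\theta\leftrightarrow\theta+\pi$, and the observation that a negative value of $g_{+}$ forces the level sets to leave every compact set along that ray. The case $g_{+}\ge 0$ and the boundedness statements are handled correctly.

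There are, however, two genuine gaps. (1) In part (ii) the termwise comparison you invoke only closes when both factors of the integrand move in the same direction: for $n\ge 4$ one has $r_{1}\le r_{2}$, hence $r_{1}^{n-3}\le r_{2}^{n-3}$, and $\Phi_{1}\ge 1\ge \Phi_{2}>0$, hence $\Phi_{1}^{-3}\le\Phi_{2}^{-3}$, and the product inequality follows; but for $n=2$ the factor $r^{n-3}=r^{-1}$ is \emph{decreasing} in $r$, so the two effects compete and ``$r\le h_{+}$ on one arc, $r>h_{+}$ on the other'' does not by itself give the sign. The inequality is still true --- setting $s=r^{\,n-1}g_{+}$ and using $h_{+}^{2}=r^{2}(1+2s)$ one reduces it to the monotonicity of $s\mapsto(1+3s)^{3}(1+2s)^{-1/2}$ --- but that computation is precisely the missing step, and it is the only nontrivial one in part (ii). (2) The uniqueness of the critical value in the sign-changing subcase of part (i) is the substantive content of Theorem C in \cite{Gasull}, and your proposal leaves it entirely unexecuted. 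Moreover, the Chebyshev strategy as formulated is doubtful: $T_{+}'(h_{+})$ is an integral over a \emph{continuum} of angles, not a finite linear combination with constant coefficients of functions of $h_{+}$, so there is no finite extended Chebyshev system to which a zero-counting bound applies without a substantial reduction that you do not indicate; the reference instead argues directly at the critical points of $T_{+}$ (showing each is a nondegenerate minimum, which forces uniqueness). A minor further point: the divergence of $T_{+}$ at the outer boundary should not be attributed to hyperbolic saddles, since the singular points on the boundary polycycle need not be hyperbolic; the divergence follows from the weaker and general fact that the transit time near any singular point on the boundary of a period annulus is unbounded.
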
 

\section{Proof of the main results}\label{sec-proof}

First we need to describe when the origin can be a $\Sigma$-center of system $X$.
\begin{lemma}\label{lem-center}
The origin is a $\Sigma$-center of $X$ if and only if one of the following conditions holds.
\begin{itemize}
  \item[$(\mathrm{I})$.] $n,m$  are odd.
  \item[$(\mathrm{II})$.] $n$  is even, $m$  is odd, and $a_{0_{+}}=0$. 
  \item[$(\mathrm{III})$.] $n$  is odd, $m$  is even, and $a_{0_{-}}=0$. 
   \item[$(\mathrm{IV})$.] $n$ and $m$  are even, $n\neq m$, and $a_{0_{+}}=a_{0_{-}}=0$.
  \item[$(\mathrm{V})$.] $n=m$  are even, and $a_{0_{+}}=a_{0_{-}}$.
\end{itemize}
	
\end{lemma}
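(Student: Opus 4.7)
The plan is to reduce the $\Sigma$-center condition $\varphi_{+}(p)=\varphi_{-}(p)$ for $p\in I_{+}$ to an algebraic condition relating $H_{\pm}(\cdot,0)$, which I then resolve by a case analysis on the parities of $n$ and $m$.

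First, since each $X_{\pm}$ is Hamiltonian, $H_{\pm}$ is constant along its orbits; hence the half Poincar\'e map $\varphi_{\pm}$ sends $(p,0)$ with $p>0$ to the unique nearby point $(-\rho_{\pm}(p),0)$, with $\rho_{\pm}(p)>0$, lying on the same level set $\{H_{\pm}=H_{\pm}(p,0)\}$. Using $H^{+}_{n+1}(x,0)=a_{0_{+}}x^{n+1}$ and the analogous identity for $H^{-}_{m+1}$, this reduces to the two implicit equations
\[
\rho_{+}^{2}+2a_{0_{+}}(-1)^{n+1}\rho_{+}^{n+1}=p^{2}+2a_{0_{+}}p^{n+1},\qquad \rho_{-}^{2}+2a_{0_{-}}(-1)^{m+1}\rho_{-}^{m+1}=p^{2}+2a_{0_{-}}p^{m+1},
\]
each of which determines a unique analytic germ $\rho_{\pm}(p)$ at $p=0$ by the implicit function theorem. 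The origin is then a $\Sigma$-center if and only if $\rho_{+}\equiv\rho_{-}$ near $0$.

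Next, I would extract the leading Taylor expansion of $\rho_{\pm}$ by parity. When $n$ is odd, $(-1)^{n+1}=1$ and the equation for $\rho_{+}$ is symmetric under the swap $\rho_{+}\leftrightarrow p$, forcing $\rho_{+}(p)\equiv p$. When $n$ is even, $(-1)^{n+1}=-1$; substituting $\rho_{+}=p(1+v)$ gives $v=2a_{0_{+}}p^{n-1}+O(p^{2(n-1)})$, so $\rho_{+}(p)=p+2a_{0_{+}}p^{n}+\cdots$, and in particular $\rho_{+}\equiv p$ iff $a_{0_{+}}=0$. The analogous statement holds for $\rho_{-}$ with $(n,a_{0_{+}})$ replaced by $(m,a_{0_{-}})$.

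Finally, I match $\rho_{+}\equiv\rho_{-}$ case by case. If $n,m$ are both odd, both sides reduce to $p$, yielding (I). If the parities of $n$ and $m$ differ, the odd side already equals $p$, so matching forces the even-side leading coefficient to vanish, giving (II) or (III). If $n\neq m$ are both even, the leading corrections sit at different powers $p^{n}$ and $p^{m}$, so $a_{0_{+}}=a_{0_{-}}=0$ is required, producing (IV). If $n=m$ are both even, the two implicit equations coincide iff $a_{0_{+}}=a_{0_{-}}$, and then uniqueness gives $\rho_{+}\equiv\rho_{-}$, yielding (V). The main subtlety will be excluding delicate higher-order cancellations in the mixed or unequal-even sub-cases, but this is automatic from the uniqueness of the analytic solutions, which reduces the matching to equating finitely many leading Taylor coefficients.
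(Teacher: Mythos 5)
Your proposal is correct and follows essentially the same route as the paper: both reduce the $\Sigma$-center condition to the level-set equations $r_0^2+2a_{0_{\pm}}r_0^{\,\cdot}=\rho_{\pm}^2+2(-1)^{\,\cdot}a_{0_{\pm}}\rho_{\pm}^{\,\cdot}$ on the switching line and then run the same parity case analysis, the only cosmetic difference being that you conclude $\rho=p$ in the odd cases via symmetry/uniqueness of the germ while the paper factors $r_1^{n+1}-r_0^{n+1}$ by $r_1^2-r_0^2$.
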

\begin{proof}
Denote by $r_{1_{\pm}}$ the value of $r$ associated to $\varphi_{\pm}((r_0,0))$ for a  point $(r_0,0)\in I_{+}$. That is, in the polar  coordinates $(r,\theta)$, we have $\varphi_{+}((r_0,0))=(r_{1_{+}},\pi)$ and $\varphi_{-}((r_0,2\pi))=(r_{1_{-}},\pi)$. By equations \eqref{eq-rh},  the following equalities hold,
 \begin{eqnarray}\label{eq-r0r1-1}
	\begin{array}{lcl}
		r_0^2 + 2 a_{0_{+}} r_0^{n+1} &=& r_{1_{+}}^2 + 2 (-1)^{n+1} a_{0_{+}} r_{1_{+}}^{n+1},\\ r_0^2 + 2 a_{0_{-}} r_0^{m+1} &= &r_{1_{-}}^2 + 2 (-1)^{m+1} a_{0_{-}} r_{1_{-}}^{m+1}.
	\end{array}
\end{eqnarray}

The origin is a  $\Sigma$-center if and only if $r_{1_{+}}= r_{1_{-}}$ for any $(r_0,0)\in I_{+}$,  which is equivalent to the following  equality  
\begin{equation}\label{eq-r0r1-2}
	a_{0_{+}} (r_0^{n+1}- (-1)^{n+1}r_{1_{+}}^{n+1})= a_{0_{-}} (r_0^{m+1}- (-1)^{m+1}r_{1_{-}}^{m+1}), \ \ \forall (r_0,0)\in I_{+}.
\end{equation}

If both $n$ and $m$  are odd integers, then we have $r_{1_{+}}= r_{1_{-}}= r_0$ for any  
$(r_0,0)\in I_{+}$. This is because  $r_{1_{+}}$ (resp. $r_{1_{-}}$) have the form $r_0 +o(r_0)$ and $r_{1_{+}}^{n+1}-r_0^{n+1}=(r_{1_{+}}^{2}-r_0^{2})Q_{+}(r_{1_{+}},r_0)$ (resp. $r_{1_{-}}^{m+1}-r_0^{m+1}=(r_{1_{-}}^{2}-r_0^{2})Q_{-}(r_{1_{-}},r_0)$), where $Q_{+}(r_{1_{+}},r_0)$ (resp. $Q_{-}(r_{1_{-}},r_0)$) is a homogeneous polynomial which can not be divided by $ r_{1_{+}}-r_0$ (resp. $ r_{1_{-}}-r_0$). In this case, the origin is always a $\Sigma$-center.

If $n$ is even and $m$ is odd, then we have $r_{1_{-}}=r_0$ for any $(r_0,0)\in I_{+}$, since $r_{1_{-}}= r_0 +o(r_0)$ and $r_{1_{-}}^{m+1}-r_0^{m+1}=(r_{1_{-}}^{2}-r_0^{2})Q_{-}(r_{1_{-}},r_0)$, where $Q_{-}(r_{1_{-}},r_0)$ is a homogeneous polynomial which can not be divided by $ r_{1_{-}}-r_0$. In this case, the origin is  a $\Sigma$-center if and only if $r_{1_{+}}=r_0$ for any $(r_0,0)\in I_{+}$, if and only if $a_{0_{+}} =0$ by equality \eqref{eq-r0r1-2}. Similarly, if $n$ is odd and $m$ is even, then  the origin is  a $\Sigma$-center if and only if  $a_{0_{-}} =0$.

If both $n$ and $m$ are even, then  the equality  \eqref{eq-r0r1-2} becomes 
$$ a_{0_{+}} (r_0^{n+1}+r_{1_{+}}^{n+1})= a_{0_{-}} (r_0^{m+1}+r_{1_{-}}^{m+1}).$$
Noticing that $r_{1_{\pm}}= r_0 +o(r_0)$, when
$n\neq m$,  the equality  \eqref{eq-r0r1-2} holds for any $(r_0,0)\in I_{+}$ if and only if $a_{0_{+}}=a_{0_{-}} =0$. While for  $n=m$, if $r_{1_{+}}\equiv r_{1_{-}}$, then  $a_{0_{+}}=a_{0_{-}} $ by the equality  \eqref{eq-r0r1-2}.	On the contrary, if $a_{0_{+}}=a_{0_{-}} $, then we have $r_{1_{+}}^2 -r_{1_{-}}^2= a_{0_{+}}(r_{1_{+}}^{n+1} -r_{1_{-}}^{n+1}) $ by the equalities  \eqref{eq-r0r1-1}, which implies that  $r_{1_{+}} = r_{1_{-}}$ for any $(r_0,0)\in I_{+}$. The proof is finished.
\end{proof}

\begin{proof}[Proof of Proposition \ref{pro-1}]
Suppose the origin is an isochronous $\Sigma$-center, then the period function $T(r_0)=T_{\pi +,r}(r_0) +T_{\pi -,r}(r_0)\equiv 2\pi$.	By Lemma \ref{lem-center},   we have the following three possible cases:

Case 1. $m$ is odd and $a_{0_{+}} =0$;

Case 2. $m$ is even and $a_{0_{+}} =a_{0_{-}}=0$;

Case 3. $m=n$ and $a_{0_{+}} =a_{0_{-}}$.

For Case 1, on one hand, since $m<n$ we have the coefficient $\mu_{(m-1)_{-}}=0$  of the term $r_0^{(m-1)}$ in the Taylor series of $T(r_0)$ , i.e. $\int_{\pi}^{2\pi}g_{-}(\theta)d\theta=0$. In this case one can get $$\mu_{2(m-1)_{-}}=2m(m+1)\int_{\pi}^{2\pi}g^2_{-}(\theta)d\theta >0.$$
On the other hand, since $n$ is even, $n-1\neq j(m-1)$ for all $j\in\mathbb{N}$.
 So the coefficient of the term $r_0^{(n-1)}$ is equal to $\mu_{(n-1)_{+}}$ in the Taylor series of $T(r_0)$. Thus  we have $\mu_{(n-1)_{+}}=0$, i.e. $\int_{\pi}^{2\pi}g_{+}(\theta)d\theta=0$.
 Consequently $$T(r_0)=2\pi +\mu_{2(m-1)_{-}} r_0^{2(m-1)}+o\left(r_0^{2(m-1)}\right)\not \equiv 2\pi.$$ This is a contradiction.

For Case 2, since $m\leq n$ and both $n-1$ and $m-1$ are odd, the coefficient of $r_0^{2(m-1)}$ in the Taylor series of $T(r_0)$ is equal to  $\mu_{2(m-1)_{-}}$ as $m<n$ and to $2\mu_{2(m-1)_{-}}$ as $m=n$. However, we have $$\mu_{2(m-1)_{-}}= 2m(m+1) \int_{\pi}^{2\pi}g_{-}^2(\theta)d\theta>0,$$
which contradicts to $T(r_0)\equiv 2\pi$.

For Case 3,  we must have  $\mu_{j(n-1)_{+}}+\mu_{j(n-1)_{-}}=0$ for all $j\geq 1$.   The first equation  
		$\mu_{(n-1)_{+}}+\mu_{(n-1)_{-}}=0$ 	
 implies that 
\begin{eqnarray}\label{eq-lca3}	
\begin{array}{lcl}
		\displaystyle\int_{0}^{\pi}g_{+}(\theta)d\theta+\int_{\pi}^{2\pi}g_{-}(\theta)d\theta=0.
	\end{array}
\end{eqnarray}
However,   the second equation can not hold, since under the equation \eqref{eq-lca3} we have 
$$\mu_{2(n-1)_{+}}+\mu_{2(n-1)_{-}}=2n(n+1)\left(\int_{0}^{\pi}g^2_{+}(\theta)d\theta+\int_{\pi}^{2\pi}g^2_{-}(\theta)d\theta\right)>0.
$$
The proof is finished.
\end{proof}

To prove Propositions \ref{pro-2} and \ref{pro-3}, we first study 
the relations between
the half period functions $T_{\pi\pm}(h_{\pm})$ and the whole period functions $T_{\pm }(h_{\pm})$ of systems $X_{\pm}$ regarded as smooth systems in $\mathbb{R}^2$.

\begin{lemma}\label{lem-pf}
If one of the following conditions 
\begin{itemize}
  \item $n$ is odd $($resp. $m$  is odd$)$,   
  \item  $ \int_{0}^{\pi} g_{+}^{2j-1}(\theta)=0$ $($resp. $ \int_{\pi}^{2\pi} g_{-}^{2j-1}(\theta)=0$$)$ for all $j\in \mathbb{N}$, 
\end{itemize}
holds, then $T_{\pi+}(h_{+})=\frac{1}{2}T_{+}(h_{+})$ $($resp. $T_{\pi-}(h_{-})=\frac{1}{2}T_{-}(h_{-})$ $)$. 
\end{lemma}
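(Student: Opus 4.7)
The plan is to relate the two half-period integrals by the symmetry $\theta\mapsto\theta+\pi$, and then handle each hypothesis separately using either the integral directly (when $n$ is odd) or the Taylor series already derived in Section~\ref{sec-pre} (when only the odd moments vanish). Since the integrand in $T_+(h_+)$ is $1/(1+(n+1)g_+(\theta)r^{n-1}(h_+,\theta))$, proving $T_{\pi+}(h_+)=\tfrac12 T_+(h_+)$ is equivalent to showing
\begin{equation*}
\int_0^{\pi}\!\frac{d\theta}{1+(n+1)g_+(\theta)r^{n-1}(h_+,\theta)}
=\int_{\pi}^{2\pi}\!\frac{d\theta}{1+(n+1)g_+(\theta)r^{n-1}(h_+,\theta)},
\end{equation*}
and the same symmetry will drive both cases.

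First I would observe that $g_+(\theta)=H^+_{n+1}(\cos\theta,\sin\theta)$ is homogeneous of degree $n+1$ in $(\cos\theta,\sin\theta)$, so that $g_+(\theta+\pi)=(-1)^{n+1}g_+(\theta)$. Applied to the defining equation $h_+^2=r^2+2r^{n+1}g_+(\theta)$ in \eqref{eq-rh}, uniqueness of the small positive solution $r$ gives $r(h_+,\theta+\pi)=r(h_+,\theta)$ whenever $n$ is odd. Changing variables $\theta\mapsto\theta+\pi$ in the integral over $[\pi,2\pi]$ then shows the two halves agree, which settles the first case for $X_+$ (and analogously for $X_-$).

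When $n$ is even the pointwise relation fails, and I would instead compare Taylor series. The derivation leading to \eqref{eq-thts} applies verbatim to the full-period function, giving
\begin{equation*}
T_+(h_+)=2\pi+\sum_{j=1}^{\infty}\tilde\lambda_{j_+}(n)\,\tilde c_{j_+}\,h_+^{j(n-1)},
\qquad \tilde c_{j_+}=\int_0^{2\pi}g_+^{j}(\theta)\,d\theta,
\end{equation*}
because the coefficients $b_{k_+}(\theta)$ in \eqref{eq-hr0} are polynomial in $g_+(\theta)$ and the endpoints of integration never entered the algebraic recursion. Splitting $\tilde c_{j_+}=c_{j_+}+\int_\pi^{2\pi}g_+^j(\theta)\,d\theta$ and using $g_+(\theta+\pi)=(-1)^{n+1}g_+(\theta)=-g_+(\theta)$ yields $\tilde c_{j_+}=(1+(-1)^j)c_{j_+}$. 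Under the hypothesis $c_{2j-1,+}=\int_0^\pi g_+^{2j-1}=0$, the odd-index contributions on both sides vanish, so $\tilde c_{j_+}=2c_{j_+}$ for every $j$, and term-by-term comparison of the Taylor series gives $T_+(h_+)=2T_{\pi+}(h_+)$. The statements for $T_{\pi-}$ and $T_-$ follow by the same argument with $n$, $+$, and $c_{j_+}$ replaced by $m$, $-$, and $c_{j_-}$.

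The only mildly subtle point is the Taylor-series case: one must be sure that equating two convergent power series in $h_+^{n-1}$ really does force the coefficients of the odd-$j$ monomials $h_+^{(2j-1)(n-1)}$ to match individually. This is immediate because $\tilde\lambda_{j_+}(n)\neq 0$ by the assertion proved after \eqref{eq-bjn}, and the exponents $j(n-1)$ are all distinct, so the expansion is unique and the coefficientwise identification is legitimate. No other step looks delicate.
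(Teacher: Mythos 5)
Your proof is correct and follows essentially the same route as the paper's: both rest on the symmetry $g_{+}(\theta+\pi)=(-1)^{n+1}g_{+}(\theta)$ together with the expansion \eqref{eq-thts}, the only (harmless) difference being that for odd $n$ you apply the symmetry directly to the integrand of $T_{+}(h_{+})$ rather than to the moments $c_{j_{+}}$. Your closing worry about uniqueness of power-series coefficients is unnecessary, since you are computing the coefficients of $T_{+}(h_{+})$ to be exactly twice those of $T_{\pi+}(h_{+})$, not inferring coefficient identities from an equality of series.
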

\begin{proof}
	If $n$ is odd, then $$\int_{0}^{\pi} g_{+}^{k}(\theta)d \theta=\int_{\pi}^{2\pi} g_{+}^{k}(\theta)d \theta=\frac{1}{2}\int_{0}^{2\pi} g_{+}^{k}(\theta)d \theta$$
	hold for any $k\in \mathbb{N}$, due to that $g_{+}(\theta)$ is a homogeneous polynomial of degree $n+1$ with respect to $\cos\theta$ and $\sin\theta$ and $$\cos(\theta+\pi)=-\cos\theta,\ \ \sin(\theta+\pi)=-\sin\theta.$$  
	The conclusion can be obtained directly from equations \eqref{eq-hr0} and  expressions \eqref{eq-thts} for $T_{\pi+}(h_{+})$ and $ T_{+}(h_{+})$ respectively.
	
If	$ \int_{0}^{\pi} g_{+}^{2j-1}(\theta)=0$ for all $j\in \mathbb{N}$,  then the   conclusion can be gotten from the expressions \eqref{eq-thts} for $T_{\pi+}(h_{+})$ and $ T_{+}(h_{+})$,  and
the following  equalities 
$$ \int_{0}^{\pi} g_{+}^{2k}(\theta)d \theta=\int_{\pi}^{2\pi} g_{+}^{2k}(\theta)d \theta=\frac{1}{2}\int_{0}^{2\pi} g_{+}^{2k}(\theta)d \theta,\ \ \forall k\in\mathbb{N}\cup\{0\}.$$

The same arguments are also valid for 	$T_{\pi-}(h_{-})$ and $T_{-}(h_{-})$.
\end{proof}

The next lemma describes the boundary  $\Gamma$ of the period annulus $\Omega$ of the $\Sigma$-center of system $X$.
 Let $\Gamma_{\pm}$ be  the boundaries  of the period annulus of the origin for systems $X_{\pm }$ defined in the whole plane $\mathbb{R}^2$ respectively. If the origin is an isochronous $\Sigma$-center of system  $X$ and $T_{\pi\pm}(h_{\pm})=T_{\pm}(h_{\pm})/2$, then 
it  can not be global centers simultaneously for systems $X_{\pm}$. Otherwise $T_{\pi\pm}(h_{\pm})$ are monotonic decreasing  simultaneously by Proposition \ref{pro-smoothpf},  thereby $T(r_0)$ is also monotonic decreasing in a neighborhood of the origin, since $h_{\pm}=r_0 +o(r_0)$ are monotonic increasing in a neighborhood of $r_0=0$. Namely, at least one of $\Gamma_{\pm}$ is a non-empty bounded   set, and if $ \Gamma=\Gamma_{+}$ (or $ \Gamma=\Gamma_{-}$) in the upper half plane $\mathbb{R}^2_{+}$ (or the lower half plane $\mathbb{R}^2_{-}$), then $ \Gamma_{+}\neq \emptyset$ (or $ \Gamma_{-}\neq \emptyset$). 

\begin{lemma}\label{lem-boundary}
	If the origin is an isochronous $\Sigma$-center of system $X$ and $T_{\pi\pm}(h_{\pm})=\frac{1}{2}T_{\pm}(h_{\pm})$, then the boundary $\Gamma$ of the period annulus satisfies either $ \Gamma=\Gamma_{+}\neq \emptyset$ in $\mathbb{R}^2_{+}$ or  $ \Gamma=\Gamma_{-}\neq \emptyset$ in $\mathbb{R}^2_{-}$.
\end{lemma}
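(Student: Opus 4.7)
The plan is to analyze the boundary $\Gamma$ through its trace on the positive $x$-axis $\mathbb{R}_{+}$. Set
$r_0^*:=\sup\{r_0>0:(r_0,0)\in\Omega\}$, and write $(0,r_{+,\max})$ and $(0,r_{-,\max})$ for the positive-$x$-axis traces of the smooth period annuli $\Omega_{\pm}$ of $X_{\pm}$, i.e.\ the intervals of definition of the half Poincar\'e maps $\varphi_{\pm}$. The discussion preceding the lemma already guarantees that at least one of $\Omega_{\pm}$ is bounded, so at least one of $r_{\pm,\max}$ is finite; in particular $r_0^*\le\min(r_{+,\max},r_{-,\max})<+\infty$.

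The key step is the identity $r_0^*=\min(r_{+,\max},r_{-,\max})$. The inequality $\le$ is automatic, since a closed orbit of $X$ through $(r_0,0)$ requires both the $X_{+}$-trajectory in $\mathbb{R}^2_{+}$ and the $X_{-}$-trajectory in $\mathbb{R}^2_{-}$ to reach $\mathbb{R}_{-}$, forcing $r_0\in(0,r_{+,\max})\cap(0,r_{-,\max})$. For the reverse inequality I would invoke analyticity: the maps $\varphi_{\pm}$ are real-analytic on their full domains of definition (as recalled in Section \ref{sec-pre}, thanks to the polynomial Hamiltonian structure), and the $\Sigma$-center hypothesis forces $\varphi_{+}\equiv\varphi_{-}$ on the initial interval $I_{+}$. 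The identity theorem for real-analytic functions then propagates this equality throughout the connected common domain $(0,\min(r_{+,\max},r_{-,\max}))$, so every such $r_0$ parameterizes a closed orbit of $X$ and lies in $\Omega$.

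Once $r_0^*=\min(r_{+,\max},r_{-,\max})$ is established, the conclusion follows immediately. If $r_{+,\max}\le r_{-,\max}$, then $r_0^*=r_{+,\max}<+\infty$, which forces $\Gamma_{+}\neq\emptyset$; the arc of the boundary orbit of $\Omega$ lying in $\mathbb{R}^2_{+}$ is exactly the $X_{+}$-trajectory through $(r_{+,\max},0)$, and by definition of $\Gamma_{+}$ this is the $\mathbb{R}^2_{+}$-part of $\Gamma_{+}$, so $\Gamma=\Gamma_{+}$ in $\mathbb{R}^2_{+}$. The symmetric case $r_{-,\max}\le r_{+,\max}$ yields $\Gamma=\Gamma_{-}\neq\emptyset$ in $\mathbb{R}^2_{-}$. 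The main subtlety I foresee is justifying the analytic-continuation step up to the boundary of the common domain, namely that $\varphi_{\pm}$ remain real-analytic on all of $(0,r_{\pm,\max})$; for polynomial Hamiltonian systems this follows by applying the implicit function theorem to the level-set equation $H_{\pm}(x,y)=h_{\pm}^2/2$ away from critical points of $H_{\pm}$, and this is exactly the feature that lets one transport the local $\Sigma$-center identity all the way out to $r_0^*$.
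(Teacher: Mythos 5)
Your overall strategy---reducing everything to the traces of the annuli on the positive $x$-axis and using analytic continuation of $\varphi_{\pm}$ to push the $\Sigma$-center identity outward to the smaller of the two endpoints---is close in spirit to the paper's argument, and the part you import from the discussion preceding the lemma (at least one of the smooth annuli is bounded) is used correctly. The genuine gap is hidden in your ``i.e.'': you identify the positive-$x$-axis trace $(0,r_{+,\max})$ of the smooth period annulus of $X_{+}$ with the interval of definition of the half Poincar\'e map $\varphi_{+}$, and you then call the inequality $r_0^*\le\min(r_{+,\max},r_{-,\max})$ ``automatic''. Neither is justified. A level curve $\{H_{+}=c\}$ with $c$ beyond the critical value attained on $\Gamma_{+}$ is no longer an oval of $X_{+}$ around the origin, but it can still contain an arc in the closed upper half-plane joining $\mathbb{R}_{+}$ to $\mathbb{R}_{-}$ (for instance when the singular points lying on the polycycle $\Gamma_{+}$ are all in the open lower half-plane, as happens already for $H_{+}=(x^2+y^2)/2+y^3/3$, whose only nontrivial singular point is $(0,-1)$). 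In that case $\varphi_{+}$ continues analytically past $r_{+,\max}$, your supremum $r_0^*$ can exceed $\min(r_{+,\max},r_{-,\max})$, and the boundary $\Gamma$ of $\Omega$ would coincide with neither $\Gamma_{+}$ nor $\Gamma_{-}$. So the direction you dismiss as automatic is exactly the one that needs an argument.

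This is precisely where the paper brings in an ingredient you never invoke: Lemma 2 of \cite{Gasull}, which asserts that $1+(n+1)g_{+}(\theta)r^{n-1}$ (resp. $1+(m+1)g_{-}(\theta)r^{m-1}$) does not vanish in the period annulus of the smooth system $X_{+}$ (resp. $X_{-}$). This yields $\partial H_{\pm}/\partial x\neq 0$ at every nonzero point of the $x$-axis inside those annuli, hence each closed orbit of the smooth systems meets the $x$-axis in exactly two points and is cut by it into exactly two arcs, one in each half-plane. It is this two-crossing property that ties the half-orbits building $\Omega$ (and the domains of $\varphi_{\pm}$) to the smooth annuli and their boundaries $\Gamma_{\pm}$. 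Your closing step, where you declare the boundary arc of $\Omega$ in $\mathbb{R}^2_{+}$ to be ``exactly the $X_{+}$-trajectory through $(r_{+,\max},0)$'' and ``by definition'' the $\mathbb{R}^2_{+}$-part of $\Gamma_{+}$, leans on the same unproved fact. To repair the proof you need to add this geometric control on how the level sets of $H_{\pm}$ meet the switching line before the sup/min bookkeeping can go through.
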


\begin{proof}
	
	Lemma 2 in \cite{Gasull} states that there is no points $(r,\theta)$ such that $ 1+(n+1)g_{+}(\theta) r^{n-1}=0$ (resp. $ 1+(m+1)g_{-}(\theta) r^{m-1}=0$) in the period annulus associated with the origin for the system $X_{+}$ (resp. $X_{-}$) defined in $\mathbb{R}^2$.
	By this lemma we have $$\frac{\partial H_{+}}{\partial x}=x+(n+1)a_{0_{+}} x^{n}\neq 0,\ \ \frac{\partial H_{-}}{\partial x}=x+(m+1)a_{0_{-}} x^{m} \neq 0,$$ for any point $(x,0)\neq (0,0)$ on $x$-axis in the period annuli.  This means that each closed orbit in the whole period annulus of the origin for system $X_{+}$ (resp. $X_{-}$) is divided  into exact two topological semicircles by $x$-axis. 
	
	By the arguments before Lemma \ref{lem-boundary} and Proposition \ref{pro-smoothpf}, there is at least one of the period annuli for systems $X_{\pm}$ having a  bounded boundary. 
	Therefore the period annulus of the $\Sigma$-center of system $X$ has either the boundary $\Gamma_{+}$ in $\mathbb{R}^2_{+}$ or the boundary $\Gamma_{-}$ in $\mathbb{R}^2_{-}$.
\end{proof}

Below we shall prove Propositions \ref{pro-2} and \ref{pro-3}.

\begin{proof}[Proof of Proposition \ref{pro-2}]
Suppose the origin is an isochronous $\Sigma$-center, then  by Lemma \ref{lem-center} we have $a_{0_{-}}=0 $, hence $r_0=h_{-}$.  
Since $n$ is odd and $m$ is even, the following equalities
	$$T(r_0)=2\pi + \sum_{j=1}^{+\infty}\mu_{j(n-1)_{+}} r_0^{j(n-1)}  + \sum_{j=1}^{+\infty}\mu_{j(m-1)_{-}} r_0^{j(m-1)} \equiv 2\pi$$
	imply that $n-1=2(m-1)$, $$ \mu_{(n-1)_{+}}=-\mu_{2(m-1)_{-}}<0,\ \ \mu_{2(n-1)_{+}}=-\mu_{4(m-1)_{-}}<0, $$ and
	$\mu_{j(m-1)_{-}}=0$ for any odd $j\in\mathbb{N}$, i.e.,	$$ \mu_{j(m-1)_{-}} =q_{j_{-},j}(m) c_{j_{-}}=q_{j_{-},j}(m) \int_{\pi}^{2\pi} g^j_{-}(\theta)d\theta=0.$$
		Note that $q_{j_{-},j}(m)$ is a nonzero polynomial of $m$, so we have $\int_{\pi}^{2\pi} g^j_{-}(\theta)d\theta=0$ for any odd $j\in\mathbb{N}$.
		 From Lemma \ref{lem-pf} we get $$ T_{\pi{\pm}}(h_{\pm })=\frac{T_{\pm}(h_{\pm })}{2}.$$ 
		
By  Proposition \ref{pro-smoothpf}, 
$T_{\pi-,r}(r_0)=T_{\pi-,r}(h_{-})=T_{\pi-}(h_{-})$ is monotonic increasing in the period annulus $\Omega$ of the $\Sigma$-center. So $T_{\pi+,r}(r_0)$ must be monotonic decreasing at least in a neighborhood of the origin.
Consequently for system $X$ the boundary $\Gamma$ of the period annulus  	has only the following 	possible cases by Lemma \ref{lem-boundary}, each of which leads to a contradiction.
\begin{itemize}
  \item $\Gamma\cap{\mathbb{R}^2_{-}}=\Gamma_{-}$. In this case  the period function of system $X$ is unbounded, since $T_{-}(h_{-})$ is monotonic increasing to the infinity as $h_{-}$ tends to the value on the boundary $\Gamma_{-}$ and $T_{+}(h_{+})$ is bounded by Proposition \ref{pro-smoothpf}. This contradicts to $T(r_0)\equiv 2\pi$.

  \item $\Gamma\cap{\mathbb{R}^2_{+}}=\Gamma_{+}$. In this case the origin is not a global center for the system $X_{+}$ defined in $\mathbb{R}^2$, so $T_{\pi+}(h_{+})$ must have a  critical value which is minimum. 
  Due to $$\frac{dh_{+}}{d r_0}=\frac{r_0+(n+1)a_{0_{+}}r_0^{n}}{h_{+}}>0$$
  in the whole period annulus of $X_{+}$ by Lemma 2 in \cite{Gasull}, 
  $ h_{+}(r_0)$ is monotonic increasing on $r_0$ in $\Omega$. Consequently $T_{\pi+,r}(r_0)$  also has a  minimum critical value. However $T_{\pi-,r}(r_0)=T_{\pi-}(h_{-}(r_0))=T_{\pi-}(r_0)$ is monotonic increasing, thus $ T(r_0)$ can not be constant, which is a contradiction. 
\end{itemize}	
The proof is finished.	 
\end{proof}

\begin{proof}[Proof of Proposition \ref{pro-3}]
Suppose the origin is an isochronous $\Sigma$-center, then $T(r_0)\equiv 2\pi$.
Since both $n$ and $m$ are odd, by Lemma \ref{lem-pf}, we have $$ T_{\pi_{\pm}}(h_{\pm })=\frac{T_{\pm}(h_{\pm })}{2}.$$

For the boundary $\Gamma$  there are the following possibilities by Lemma \ref{lem-boundary}, each of which yields  a contradiction.

\begin{itemize}[
  itemindent = 0pt,
  labelindent = \parindent,
  labelwidth = 2em,
 labelsep = 5pt,
  leftmargin = 2.2cm]
 
  \item[Case 1.] $\Gamma\cap{\mathbb{R}^2_{+}}=\Gamma_{+}$ and $T_{+}(h_{+})$ is monotonic decreasing in a neighborhood of the origin. Since the origin is not a global center for the system $X_{+}$, $T_{+}(h_{+})$ has exact one critical value which is minimum. Besides, note that $h_{+}$ depends on $r_0$ monotonic increasingly in the period annulus of system $X_{+}$, so  $T_{+,r}(r_0)$ also has a minimum  critical value. On the other hand, 
  $T_{-}(h_{-})$ is monotonic increasing in a neighborhood of the origin, thereby it is monotonic increasing on $h_{-}$ in the whole   period annulus $\Omega$ of system $X$. Since $$\frac{dh_{-}}{d r_0}=\frac{r_0+(m+1)a_{0_{-}}r_0^{m}}{h_{-}}>0$$
  in the whole period annulus of system $X_{-}$ by Lemma 2 in \cite{Gasull}, then
  $ h_{-}(r_0)$ is monotonic increasing on $r_0$ in $\Omega$. Consequently  $T_{-,r}(r_0)$ is also monotonic increasing in $\Omega$. This means that the derivative of $T(r_0)=\left(T_{+,r}(r_0)+T_{-,r}(r_0)\right)/2$ is not equal to $0$ at the critical point of $T_{+,r}(r_0)$, i.e., $T(r_0)$ can not be a constant, which is a contradiction.
  
  \item[Case 2.] $\Gamma\cap{\mathbb{R}^2_{+}}=\Gamma_{+}$ and $T_{+}(h_{+})$ is monotonic increasing in a neighborhood of the origin. Then $T_{+}(h_{+})$ tends  to the infinity monotonously as $h_{+}$ tends to the value on $\Gamma_{+}$. However, $T_{-}(h_{-})$ is decreasing and bounded, so $T(r_0)$ is unbounded. This is also a contradiction.
  \item[Case 3.] $\Gamma\cap{\mathbb{R}^2_{-}}=\Gamma_{-}$ and $T_{-}(h_{-})$ is monotonic decreasing in a neighborhood of the origin. This case is similar to the Case 1.
  \item[Case 4.] $\Gamma\cap{\mathbb{R}^2_{-}}=\Gamma_{-}$ and $T_{-}(h_{-})$ is monotonic increasing in a neighborhood of the origin. This case is similar to the Case 2.
\end{itemize}
The proof is finished.
\end{proof}

\section*{Acknowledgements}

This work is supported by  the China Scholarship Council (No. 202306780018).


\begin{thebibliography}{10}

\bibitem{AGR} B. Arcet, J. Gine and V.G. Romanovski, Linearizability of planar polynomial Hamiltonian systems,
Nonlinear Analysis: Real World Applications 63 (2022), 103422, 19.


		
\bibitem{ChZh}  X. Chen, V. G. Romanovskib, and W. Zhang,
Non-isochronicity of the center at the origin in polynomial
Hamiltonian systems with even degree nonlinearities,  Nonlinear Analysis, 68 (9) (2008), 2769-2778.

\bibitem{Chen12}
X. Chen and W. Zhang, Isochronicity of centers in a switching Bautin system, J. Differential Equations, 252 (3) (2012), 2877--2899.

\bibitem{cima} A. Cima, F. Ma$\tilde{\rm n}$osas, and J. Villadelprat, Isochronicity for several classes of Hamiltonian systems, J. Differential Equations 157 (2) (1999), 373-413.


\bibitem{Gasull}  A. Gasull, A. Guillamon, V. Ma$\tilde{\rm n}$osa and F. Ma$\tilde{\rm n}$osas, The period function for Hamiltonian systems with homogeneous nonlinearities, J. Differential Equations 139 (1997), 237-260.

\bibitem{Gasull03}
A. Gasull and J. Torregrosa, Center-focus problem for discontinuous planar differential equations, Internat. J. Bifur. Chaos 13 (7) (2003), 1755--1765.

\bibitem{Li15}
F. Li, P. Yu, Y. Tian and Y. Liu,  Center and isochronous center conditions for switching systems associated with elementary singular points, Commun. Nonlinear.  Numer. Simulat, 28 (1-3)  (2015), 81--97.


\bibitem{Liu-Wang}
C. Liu and S. Wang, On the isochronous center of planar piecewise polynomial potential systems,
Proc. Amer. Math. Soc., 150 (2022), 2499--2507.

\bibitem{llibre} J. Llibre and  V. G. Romanovski,  Isochronicity and linearizability of planar polynomial Hamiltonian systems, J. Differential Equations 259 (5) (2015), 1649-1662.



\bibitem{novaes2022}
D.D. Novaes and L. A. Silva, On the non-existence of isochronous tangential centers in Filippov vector
fields, Proc. Amer. Math. Soc., 150 (2022), 5349--5358.


\bibitem{Tian15}
Y. Tian and P. Yu, Center conditions in a switching Bautin system, J. Differential Equations, 259 (3) (2015), 1203--1226.





\end{thebibliography}
\end{document}